\newtheorem{theorem}{Theorem}
\newtheorem*{theorem*}{Theorem}
\newtheorem*{proposition*}{Proposition}
\newtheorem*{lemma*}{Lemma}
\newtheorem{definition}{Definition}
\newtheorem*{notation}{Notation}
\newtheorem*{conjecture}{Conjectures}
\newtheorem{remark}{Remark}
\theoremstyle{definition} }
\def\hpic #1 #2 {\mbox{$\begin{array}[c]{l} \epsfig{file=#1,height=#2}
\end{array}$}}
\def\vpic #1 #2 {\mbox{$\begin{array}[c]{l} \epsfig{file=#1,width=#2}
\end{array}$}}
\newcommand  {\rmn}\romannumeral
\newcommand {\CL}{\mathcal{L}}
\newcommand {\IN}{\mathbb{N}}
\newcommand {\CP}{\mathcal{P}}
\begin{document}
\title[]{A computational study of the number of connected components of positive Thompson links}
\author{Valeriano Aiello} 
\author{Stefano Iovieno}

\begin{abstract}
Almost a decade ago Vaughan Jones introduced a method to produce knots from elements of the Thompson groups $F$, 
which was later extended to the Brown-Thompson group $F_3$. 
In this article we define a way to produce permutations out of elements of the $F$ and $F_3$ that we call Thompson permutations. 
The number of orbits of each Thompson permutation coincides with the number of connected components of the link.
We explore the positive elements of $F_3$ of fixed \emph{width} and \emph{height} and make some conjectures based on numerical experiments.
In order to define the Thompson permutations we need to assign an orientation to each   link produced from elements of $F$ and $F_3$.
We prove that all oriented links can be produced in this way.
\end{abstract}
\maketitle
 
 \section*{Introduction} 
 The Thompson group $F$ is an interesting group that was introduced by Richard Thompson in the sixties and so far it has attracted attention in several areas of
 mathematics, such as logic, topology, dynamics, geometric group theory, see e.g. \cite{CFP}. It admits many equivalent definitions, in terms of homeomorphisms of the unit interval, tree diagrams, strand diagrams, 
 as a diagram group, to name but a few \cite{CFP, BM, GS}.
In this article, $F$  appears because of Jones's reconstruction program of conformal field theories.
Within this  framework, Vaughan Jones defined a method to construct actions of the Thompson and "Thompson" like groups, \cite{Jo16}. 
In a special application of this method, it surfaced Jones's construction of
knots and links, which was first defined for the Thompson group $F$.
Later, by seeing $F$ as a subgroup of the Brown-Thompson group $F_3$, it was extended  to the Brown-Thompson group $F_3$. 
Given an element $g$ in $F$ or $F_3$, we denote by $\CL(g)$ the corresponding link (the exact construction will be recalled  in Section \ref{sec1}).
This procedure is analogous to the operation of closure on the braid groups, 
where knots and links can be obtained by \emph{closing} braids. 
For this reason we call \emph{closure} operation of producing a link from an element of $F$ or $F_3$.
The Thompson and the Brown-Thompson groups turn out to be as good as the braid groups at producing (unoriented) links. 
In fact, Jones proved an Alexander-type theorem, \cite{Jo14, Jo18}, that is, he proved that every  link can be obtained as the closure of a suitable element of $F$.
Since these links did not possess a natural orientation, Jones introduced the oriented versions of these groups: the oriented subgroups $\vec{F}$
and $\vec{F}_3$. An Alexander-type theorem holds in this context as well, see \cite{A}.

Along with this knot construction, several unitary representations came along \cite{Jo14, Jo16, Jo19, ACJ, ABC, AJ, BJ, AiCo1, AiCo2, BJ, TV, TV2, TV3, Bro}. 
These representations were defined either by means of planar algebras \cite{jo2} or Pythagorean/Cuntz algebras \cite{BJ, Cuntz}.
Thanks to this, progress was made on the study of infinite index maximal subgroups of the Thompson groups, \cite{GS2, TV, TV2}.

Like the braid groups, the groups $F$ and $F_3$ possess natural monoids 
$F_+$ and $F_{3,+}$ whose elements are called positive.
The links produced by these monoids are called positive Thompson knots and positive Brown-Thompson knots. The former class of knots has been studied
in \cite{AB2} by S. Baader and the first named author of this paper, where it was shown that this class is contained in the family of arborescent knots in the sense of Conway.
The oriented subgroup $\vec{F}$ also contains a monoid $\vec{F}_{+}$ of positive elements ($\vec{F}_{+}\subset F_+$), whose knots are positive (in the oriented sense),  see \cite{AB}.

 This work is motivated by \cite[Question 4]{Jo18}, where Jones asked to provide a group theoretical interpretation of the number of components of $\CL(g)$.
The analogy with the braid groups has so far been used as a guide  in this project. Recall that the presentation of braid group $B_n$ 
and that of the symmetric group $S_n$ differ only by one additional relation stating that the generators are involutive.
There is therefore a natural projection from $B_n$ to $S_n$.
It is well known that the number of orbits of such permutations (which is the same thing as the number of cycles in the cycle decomposition of the permutation) coincide with the number of connected components of the corresponding links.
Inspired by this, we define a method to produce permutations out of elements of $F$ and $F_3$. We call these permutations Thompson permutations and, by construction, the number of orbits of these permutations coincide with the number of connected components of the link produced with Jones's construction of knots.
The definition of these permutations requires the choice of an orientation for the link. We thus fix a convention to orient any link produced from $F$ and $F_3$, and prove an Alexander type theorem (Theorem \ref{theo1}), that is, for every oriented link $\vec{L}$, there exist an element of $F$ whose corresponding oriented link is equal to $\vec{L}$.
This means that for producing oriented links from Thompson groups,  two options are available: 1) use the oriented subgroups $\vec{F}$ and $\vec{F}_3$, \cite{Jo14, Jo18, A}, 2) use the Thompson groups $F$ and $F_3$.

In the last section, we summarize some computer assisted explorations of 1350210 positive elements, 
 and formulate some conjectures. 
More precisely, we fix the \emph{width}, while varying the \emph{height} of positive elements and group the corresponding permutations with respect to the number of orbits, we call these groups \emph{classes}. 
We then make conjectures on the maximum number of orbits 
and 
 the largest classes.

 \section{Preliminaries and notations.}\label{sec1}
  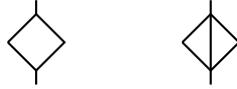
\begin{figure}
 \[\begin{tikzpicture}[x=.75cm, y=.75cm,
    every edge/.style={
        draw,
      postaction={decorate,
                    decoration={markings}
                   }
        }
]

\draw[thick] (0,0)--(.5,.5)--(1,0)--(.5,-.5)--(0,0);
\draw[thick] (0.5,0.75)--(.5,.5);
\draw[thick] (0.5,-0.75)--(.5,-.5);
\end{tikzpicture}
\qquad 
\qquad 
\begin{tikzpicture}[x=.75cm, y=.75cm,
    every edge/.style={
        draw,
      postaction={decorate,
                    decoration={markings}
                   }
        }
]

\draw[thick] (0.5,0.75)--(.5,-.75);
\draw[thick] (0,0)--(.5,.5)--(1,0)--(.5,-.5)--(0,0);
\end{tikzpicture}
\]
\caption{Pairs of opposing carets in $F$ and $F_3$.}\label{AAA}
\end{figure}
 \begin{figure}
\[\begin{tikzpicture}[x=.75cm, y=.75cm,
    every edge/.style={
        draw,
      postaction={decorate,
                    decoration={markings}
                   }
        }
]

\draw[thick] (0,0)--(.5,.5)--(1,0);
\draw[thick] (0.5,0.75)--(.5,.5);
 \node at (1.75,0.25) {$\scalebox{1}{$\mapsto$}$};

\end{tikzpicture}\, \,
\begin{tikzpicture}[x=.75cm, y=.75cm,
    every edge/.style={
        draw,
      postaction={decorate,
                    decoration={markings}
                   }
        }
]

\draw[thick] (0.5,0.75)--(.5,0);
\draw[thick] (0,0)--(.5,.5)--(1,0);
 \end{tikzpicture}
\]
 \caption{The monomorphism $\iota: F\to F_3$.}\label{f2f3}
\end{figure}

 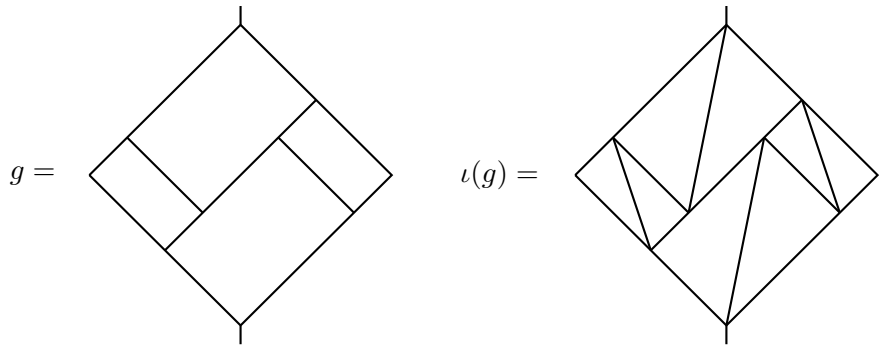
\begin{figure}[h]
\[
\begin{tikzpicture}[x=1cm, y=1cm,
    every edge/.style={
        draw,
      postaction={decorate,
                    decoration={markings}
                   }
        }
]

\node at (-.75,0) {$\scalebox{1}{$g=$}$};

\draw[thick] (0,0)--(2,2)--(4,0)--(2,-2)--(0,0);
\draw[thick]  (3,1)--(1,-1);
\draw[thick]  (2.5,.5)--(3.5,-.5);
\draw[thick]  (.5,.5)--(1.5,-.5);
\draw[thick]  (2,2)--(2,2.25);
\draw[thick]  (2,-2)--(2,-2.25);

\end{tikzpicture}
\qquad 
\begin{tikzpicture}[x=1cm, y=1cm,
    every edge/.style={
        draw,
      postaction={decorate,
                    decoration={markings}
                   }
        }
]

\node at (-1,0) {$\scalebox{1}{$\iota(g)=$}$};

\draw[thick] (0,0)--(2,2)--(4,0)--(2,-2)--(0,0);
\draw[thick]  (3,1)--(1,-1);
\draw[thick]  (2.5,.5)--(3.5,-.5);
\draw[thick]  (.5,.5)--(1.5,-.5);
 
 \draw[thick]  (.5,.5)--(1,-1);
 \draw[thick]  (3,1)--(3.5,-.5);
 \draw[thick]  (2,2)--(1.5,-.5);
 \draw[thick]  (2,-2)--(2.5,.5);

\draw[thick]  (2,2)--(2,2.25);
\draw[thick]  (2,-2)--(2,-2.25);

\end{tikzpicture}
\]
\caption{A element of $F$ and its image under the injection $\iota: F\to F_3$.}\label{fig2}
\end{figure}

There are several equivalent definitions of the Thompson groups $F$ and of the Brown-Thompson group $F_3$.
In this section we review the definitions
that are most appropriate for our work in this paper, namely the one that use tree diagrams. 
For further information we refer to  \cite{CFP, B} and \cite{Brown}.

A binary tree diagram is a pair of rooted, planar, binary trees $(T_+,T_-)$ with the same number of leaves.
Similarly, a ternary tree diagram is a pair of rooted, planar, ternary trees $(T_+,T_-)$ with the same number of leaves.
A common notation for these pairs is $\frac{T_+}{T_-}$.
As usual, we draw a pair of trees in the $xy$-plane with one tree upside down on top of the other. 
More precisely, the leaves of the trees sit on the natural numbers of the $x$-axis.
In both cases, the former tree is called top tree, the other bottom tree.
Two pairs of  trees are equivalent if they differ by a pair of opposing carets, see   Figure \ref{AAA}. 
The elements of $F$ are given by an equivalence classes of pairs of binary tree diagrams.
While elements of $F_3$ are equivalence classes of ternary tree diagrams.
For both $F$ and $F_3$ and for each element in them, there exists a minimal representative, in the sense a tree diagrams whose top and bottom trees have minimal number of leaves.
These representative are called reduced.
Thanks to the equivalence relation, the following rule defines the multiplication in both  $F$ and $F_3$: $(T_+,T)\cdot (T,T_-):=(T_+,T_-)$. The trivial element is represented by any pair $(T,T)$  
and the inverse of $(T_+,T_-)$ is $(T_-,T_+)$.

In Figure \ref{f2f3} we describe a natural injection $\iota: F\hookrightarrow F_3$: 
Given $(T_+,T_-)\in F$ we turn every trivalent vertex into a $4$-valent 
and join the new edges in the only possible planar way.
See Figure  \ref{fig2} for an example.
 
 The
 Thompson group $F$ and
  Brown-Thompson group $F_3$ also admits an infinite presentation
\begin{align*}
F & = \langle y_0, y_1, \ldots \; | \; y_n y_l = y_l y_{n+1} \;\;  \forall \; l<n\rangle\; ,\\
F_3 &=\langle  x_0, x_1, \ldots \; | \; x_n x_l = x_l x_{n+2} \;\;  \forall \; l<n\rangle\; . 
\end{align*}
See Figures \ref{genF} and \ref{genF3} for the tree diagrams of the generators of $F$ and $F_3$.
Actually $\{y_0, y_1\}$ and $\{x_0, x_1, x_2\}$ suffice to generate $F$ and $F_3$, respectively \cite{B, Brown}.

 \begin{figure}
\[
\begin{tikzpicture}[x=.35cm, y=.35cm,
    every edge/.style={
        draw,
      postaction={decorate,
                    decoration={markings}
                   }
        }
]

\node at (-1.5,0) {$\scalebox{1}{$y_0=$}$};
\node at (-1.25,-4) {\;};

\draw[thick] (0,0) -- (2,2)--(4,0)--(2,-2)--(0,0);
 \draw[thick] (1,1) -- (2,0)--(3,-1);

 \draw[thick] (2,2)--(2,2.5);

 \draw[thick] (2,-2)--(2,-2.5);

\end{tikzpicture}
\;\;
\begin{tikzpicture}[x=.35cm, y=.35cm,
    every edge/.style={
        draw,
      postaction={decorate,
                    decoration={markings}
                   }
        }
]

\node at (-3.5,0) {$\scalebox{1}{$y_1=$}$};
\node at (-1.25,-4) {\;};

\draw[thick] (2,2)--(1,3)--(-2,0)--(1,-3)--(2,-2);

\draw[thick] (0,0) -- (2,2)--(4,0)--(2,-2)--(0,0);
 \draw[thick] (1,1) -- (2,0)--(3,-1);

 \draw[thick] (1,3)--(1,3.5);
 \draw[thick] (1,-3)--(1,-3.5);

\end{tikzpicture}
\;\;
\begin{tikzpicture}[x=.35cm, y=.35cm,
    every edge/.style={
        draw,
      postaction={decorate,
                    decoration={markings}
                   }
        }
]

\node at (-5.5,0) {$\scalebox{1}{$y_3=$}$};
\node at (6,0) {$\ldots$};

\draw[thick] (2,2)--(1,3)--(-2,0)--(1,-3)--(2,-2);
\draw[thick] (1,3)--(0,4)--(-4,0)--(0,-4)--(1,-3); 

\draw[thick] (0,0) -- (2,2)--(4,0)--(2,-2)--(0,0);
 \draw[thick] (1,1) -- (2,0)--(3,-1);

 \draw[thick] (0,4)--(0,4.5);
 \draw[thick] (0,-4)--(0,-4.5);

\end{tikzpicture}
\]
\caption{The generators of $F=F_2$.}\label{genF}
\phantom{This text will be invisible} 
\end{figure}
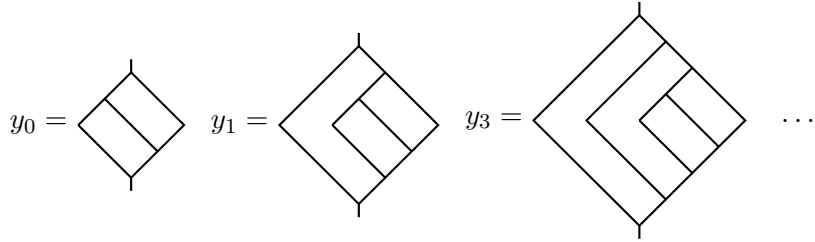
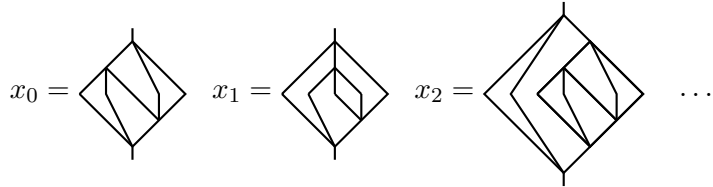
\begin{figure}
\[
\begin{tikzpicture}[x=.35cm, y=.35cm,
    every edge/.style={
        draw,
      postaction={decorate,
                    decoration={markings}
                   }
        }
]

\node at (-1.5,0) {$\scalebox{1}{$x_0=$}$};
\node at (-1.25,-3.25) {\;};

\draw[thick] (0,0) -- (2,2)--(4,0)--(2,-2)--(0,0);
\draw[thick] (1,1) -- (1,0)--(2,-2);
\draw[thick] (1,1) -- (2,0)--(3,-1);
\draw[thick] (2,2) -- (3,0)--(3,-1);

 \draw[thick] (2,2)--(2,2.5);
 \draw[thick] (2,-2)--(2,-2.5);

\end{tikzpicture}
\;\;
\begin{tikzpicture}[x=.35cm, y=.35cm,
    every edge/.style={
        draw,
      postaction={decorate,
                    decoration={markings}
                   }
        }
]

\node at (-1.5,0) {$\scalebox{1}{$x_1=$}$};
\node at (-1.25,-3.25) {\;};


\draw[thick] (0,0) -- (2,2)--(4,0)--(2,-2)--(0,0);
 \draw[thick] (1,0)--(2,-2);
\draw[thick] (3,0)--(2,1) -- (1,0); 
\draw[thick] (2,0)--(3,-1);
\draw[thick] (2,2) -- (2,0);
\draw[thick] (3,0)--(3,-1);


 \draw[thick] (2,2)--(2,2.5);
 \draw[thick] (2,-2)--(2,-2.5);
\end{tikzpicture}
\;\;
\begin{tikzpicture}[x=.35cm, y=.35cm,
    every edge/.style={
        draw,
      postaction={decorate,
                    decoration={markings}
                   }
        }
]

\node at (-3.5,0) {$\scalebox{1}{$x_2=$}$};
\node at (-1.25,-3.25) {\;};

\draw[thick] (2,2)--(1,3)--(-2,0)--(1,-3)--(2,-2);

\draw[thick] (0,0) -- (2,2)--(4,0)--(2,-2)--(0,0);
 \draw[thick] (1,1) -- (2,0)--(3,-1);

\draw[thick] (0,0) -- (2,2)--(4,0)--(2,-2)--(0,0);
\draw[thick] (1,1) -- (1,0)--(2,-2);
\draw[thick] (1,1) -- (2,0)--(3,-1);
\draw[thick] (2,2) -- (3,0)--(3,-1);

\draw[thick] (1,3) -- (-1,0)--(1,-3);

\node at (6,0) {$\ldots$};

 \draw[thick] (1,3)--(1,3.5);
 \draw[thick] (1,-3)--(1,-3.5);

\end{tikzpicture}
\]
\caption{The generators of $F_3$.}\label{genF3}
\phantom{This text will be invisible} \end{figure}

The positive monoid $F_+$ of $F$ is the monoid generated by $\{y_i\}_{i\geq 0}$ (but not their inverses) and
 consists of the binary tree diagrams whose bottom tree may be chosen of the following form
\[
\begin{tikzpicture}[x=.6cm, y=.6cm,
    every edge/.style={
        draw,
      postaction={decorate,
                    decoration={markings}
                   }
        }
]
\node (bbb) at (-2,-2) {$\scalebox{1}{$T_-=$}$}; 

\draw[thick] (0,0)--(4,-4)--(8,0);
\draw[thick] (4.5,-3.5)--(1,0);
\draw[thick] (5,-3)--(2,0);
\draw[thick] (7,-1)--(6,0);
\draw[thick] (7.5,-.5)--(7,0);
\draw[thick] (4,-4)--(4,-4.5);

\node (aaaa) at (5,-1) {$\scalebox{1}{$\ldots$}$}; 

\end{tikzpicture}\]
Similarly, the positive monoid $F_{3,+}$ of $F_3$
 is  generated by $\{x_i\}_{i\geq 0}$ and its elements
 admit ternary tree diagrams whose bottom tree is of the form 
\[
\begin{tikzpicture}[x=.7cm, y=.7cm,
    every edge/.style={
        draw,
      postaction={decorate,
                    decoration={markings}
                   }
        }
]

\draw[thick] (0,0)--(4,-4)--(8,0);
\draw[thick] (4.5,-3.5)--(1,0);
\draw[thick] (5,-3)--(2,0);
\draw[thick] (7,-1)--(6,0);
\draw[thick] (7.5,-.5)--(7,0);
\draw[thick] (4,-4)--(4,-4.5);

\draw[thick] (0.5,0)--(4,-4);
\draw[thick] (1.5,0)--(4.5,-3.5);
\draw[thick] (2.5,0)--(5,-3);
\draw[thick] (7,-1)--(6.5,0);
\draw[thick] (7.5,-.5)--(7.5,0);

\node (aaaa) at (5,-1) {$\scalebox{1}{$\ldots$}$}; 

\end{tikzpicture}
\]
Note that $\iota(y_i)=x_{2i}$ for all $i$. In particular,
$\iota(F_+)$ is contained in $F_{3,+}$.
 
 We now review Jones's construction of knots from elements of $F_3$ by giving an explicit example. We refer to \cite{Jo18} and \cite{A2} for survey articles on this area.
 Consider the element of $F_3$
 \[\begin{tikzpicture}[x=.75cm, y=.75cm,
    every edge/.style={
        draw,
      postaction={decorate,
                    decoration={markings}
                   }
        }
]

\draw[thick] (1,0)--(3,2)--(5,0);
\draw[thick] (3,1)--(3,2);
\draw[thick] (2,0)--(3,1);
\draw[thick] (3,0)--(3,1);
\draw[thick] (4,0)--(3,1);
\draw[thick] (3,2.5)--(3,2);

\draw[thick] (3,-2.5)--(3,-2);

\draw[thick] (1,0)--(3,-2)--(5,0);
\draw[thick] (2,0)--(3,-2);
\draw[thick] (3,0)--(4,-1);
\draw[thick] (4,0)--(4,-1);

\node at (-1,0) {$\scalebox{1}{$X=\frac{T_+}{T_-}=$}$};

\node at (0,-1.2) {\phantom{$\frac{T_+}{T_-}=$}};

\end{tikzpicture}
\]
Now join the two roots by an edge. Wolog we may suppose that the new edge passes through the point $(0,0)$.
\[\begin{tikzpicture}[x=.5cm, y=.5cm,
    every edge/.style={
        draw,
      postaction={decorate,
                    decoration={markings}
                   }
        }
]

\draw[thick] (1,0)--(3,2)--(5,0);
\draw[thick] (3,1)--(3,2);
\draw[thick] (2,0)--(3,1);
\draw[thick] (3,0)--(3,1);
\draw[thick] (4,0)--(3,1);
 \draw[thick] (-1,2) to[out=90,in=90] (3,2);

 
\draw[thick] (1,0)--(3,-2)--(5,0);
\draw[thick] (2,0)--(3,-2);
\draw[thick] (3,0)--(4,-1);
\draw[thick] (4,0)--(4,-1);

\draw[thick] (-1,-2) to[out=-90,in=-90] (3,-2);  
\draw[thick] (-1,-2)--(-1,2);

\node at (-1.5,0) {$\scalebox{1}{$\;$}$};

\node at (0,-1.2) {$\;$};

\end{tikzpicture}
\]
At this stage all the vertices are $4$-valent, change them according to the  rule displayed in Figure \ref{figrulescross} to obtain a knot diagram.
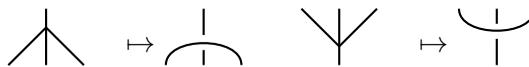
\begin{figure}[h] 
\[\begin{tikzpicture}[x=1cm, y=1cm,
    every edge/.style={
        draw,
      postaction={decorate,
                    decoration={markings}
                   }
        }
]


\draw[thick] (0.5,0.75)--(.5,0);
\draw[thick] (0,0)--(.5,.5)--(1,0);
 \node at (1.75,0.25) {$\scalebox{1}{$\mapsto$}$};

\end{tikzpicture}
\begin{tikzpicture}[x=1cm, y=1cm,
    every edge/.style={
        draw,
      postaction={decorate,
                    decoration={markings}
                   }
        }
]

 \draw[thick] (0.5,0.75)--(.5,.4);
\draw[thick] (0.5,0)--(.5,.2);

 \draw[thick] (0,0) to[out=90,in=90] (1,0);

\end{tikzpicture}
\qquad
\begin{tikzpicture}[x=1cm, y=1cm,
    every edge/.style={
        draw,
      postaction={decorate,
                    decoration={markings}
                   }
        }
]


\draw[thick] (0.5,0.75)--(.5,0);
\draw[thick] (0,0.75)--(.5,.25)--(1,.75);
 \node at (1.75,0.25) {$\scalebox{1}{$\mapsto$}$};

\end{tikzpicture}
\begin{tikzpicture}[x=1cm, y=1cm,
    every edge/.style={
        draw,
      postaction={decorate,
                    decoration={markings}
                   }
        }
]

 \draw[thick] (0.5,0.75)--(.5,.55);
\draw[thick] (0.5,0)--(.5,.35);

 \draw[thick] (0,.75) to[out=-90,in=-90] (1,.75);

\end{tikzpicture}
\]
\caption{The rules needed 
to turn $4$-valent vertices into crossings.}\label{figrulescross}
\phantom{This text will be invisible} 
\end{figure}

Therefore, in our example we get the knot $\CL(T_+,T_-)$
\[\begin{tikzpicture}[x=.5cm, y=.5cm,
    every edge/.style={
        draw,
      postaction={decorate,
                    decoration={markings}
                   }
        }
]

\draw[thick] (1,0)--(1,1);
 \draw[thick] (1,1) to[out=90,in=90] (5,1);  
 \draw[thick] (2,0) to[out=90,in=90] (4,0);  
\draw[thick] (3,0)--(3,.3);
\draw[thick] (3,.7)--(3,2);
\draw[thick] (5,0)--(5,1);

 \draw[thick] (-1,2) to[out=90,in=90] (3,2.35);

 
\draw[thick] (1,0)--(1,-.75);
\draw[thick] (2,0)--(2,-1.4);
\draw[thick] (3,0) to[out=-90,in=-90] (5,0);  
\draw[thick] (4,0)--(4,-.3);

\draw[thick] (-1,-2) to[out=-90,in=-90] (2,-1.75);  
\draw[thick] (-1,-2)--(-1,2);
 \draw[thick] (1,-.75) to[out=-90,in=-90] (4,-.75);

\node at (-3.5,0) {$\scalebox{1}{$\CL(T_+,T_-)=$}$};

\node at (0,-1.2) {$\;$};

\end{tikzpicture}
\]
If we start from an element $g$ of $F$, we consider $\iota(g)\in F_3$
and follow the procedure described above.
\begin{notation} We write $\CL(g)$ for the knot/link produced with the reduced tree diagram representing $g$.
 \end{notation} 
 Within this framework, an Alexander type theorem holds, that is for any unorientend link $L$ there exists an element $g$ in $F$ such that $\CL(g)=L$, \cite{Jo14}.
In passing, we mention that the construction in \cite{Jo14} is not "optimal", for the element $g$ tend to be "big" (in the sense that they have several leaves).
A more methodological approach to finding elements of $F$ that correspond to tangle sums or concatenations of (positive) $n$-crossing tangles
was recently pursued in \cite{GP}.

 \section{Thompson permutations}
 In this section we assign a permutation to each element of the Brown-Thompson group $F_3$ (and also to each element of $F$, seeing it as $\iota(F)\leq F_3$).

We briefly fix the notation for the permutations, \cite[Chapter 3]{Her}.
Given $k$ distinct integers $i_1, \ldots , i_k$ in $\{0, \ldots , n\}$, the symbol $(i_1, \ldots , i_k)$ represents the permutation $p: \{0, \ldots , n\}\to \{0, \ldots , n\}$, where 
$p(i_j)=i_{j+1}$ for $j<k$, $p(i_k)=i_1$, and $p(s)=s$ for all $s\in \{0, \ldots , n\}\setminus \{i_1, \ldots , i_k\}$. When $k=2$, the permutation is called a transposition. A permutation of the form $(i_1, \ldots , i_k)$ is called a $k$-cycle. Two cycles are said to be disjoint if they have no integers in common. Every permutation is the product of disjoint cycles.
 
First, given a ternary tree with $k$ leaves we are going to construct a permutation of $S_{k+1}$, acting on $\{0,1,\ldots , k\}$, which is the product of $(k+1)/2$ transpositions ($k$ is always odd, so set $k=2n+1$). 
Given a rooted ternary tree, say with $2n+1$ leaves, we define  a permutation on the set $\{0, \ldots , 2n+1\}$ associated with it.
We illustrate it with a couple of examples. Consider the pair of trees
\[
\begin{tikzpicture}[x=.5cm, y=.5cm,
    every edge/.style={
        draw,
      postaction={decorate,
                    decoration={markings}
                   }
        }
]

\node at (-3.5,0) {$\scalebox{1}{$T_+=$}$};

\draw[thick] (2,2)--(1,3)--(-2,0);

\draw[thick] (0,0) -- (2,2)--(4,0);
 \draw[thick] (1,1) -- (2,0);

\draw[thick] (0,0) -- (2,2)--(4,0);
\draw[thick] (1,1) -- (1,0);
\draw[thick] (1,1) -- (2,0);
\draw[thick] (2,2) -- (3,0);

\draw[thick] (1,3) -- (-1,0);

 \draw[thick] (1,3)--(1,3.5);

\node at (-2,-.75) {$\scalebox{.75}{$1$}$};
\node at (-1,-.75) {$\scalebox{.75}{$2$}$};
\node at (0,-.75) {$\scalebox{.75}{$3$}$};
\node at (1,-.75) {$\scalebox{.75}{$4$}$};
\node at (2,-.75) {$\scalebox{.75}{$5$}$};
\node at (3,-.75) {$\scalebox{.75}{$6$}$};
\node at (4,-.75) {$\scalebox{.75}{$7$}$};

\end{tikzpicture}
\qquad
\begin{tikzpicture}[x=.5cm, y=.5cm,
    every edge/.style={
        draw,
      postaction={decorate,
                    decoration={markings}
                   }
        }
]

\node at (-3.5,0) {$\scalebox{1}{$T_+=$}$};

\draw[thick] (2,2)--(1,3)--(-2,0);

\draw[thick] (0,0) -- (2,2)--(4,0);

\draw[thick] (0,0) -- (2,2);
\draw[thick] (2,2) -- (1,0);
\draw[thick] (3,1) -- (2,0);
\draw[thick] (3,1) -- (3,0);

\draw[thick] (1,3) -- (-1,0);

 \draw[thick] (1,3)--(1,3.5);

\node at (-2,-.75) {$\scalebox{.75}{$1$}$};
\node at (-1,-.75) {$\scalebox{.75}{$2$}$};
\node at (0,-.75) {$\scalebox{.75}{$3$}$};
\node at (1,-.75) {$\scalebox{.75}{$4$}$};
\node at (2,-.75) {$\scalebox{.75}{$5$}$};
\node at (3,-.75) {$\scalebox{.75}{$6$}$};
\node at (4,-.75) {$\scalebox{.75}{$7$}$};

\end{tikzpicture}
\]
where we numbered the leaves of each tree from left to right (starting from $1$).
We start with the   tree $T_+$.
We consider each leaf and take a path according to the rules displayed in Figure \ref{rules}. Each path ends when we meet another leaf or the root
(the paths for $T_+$ are highlighted in red in the figure below).   

\[
\begin{tikzpicture}[x=.5cm, y=.5cm,
    every edge/.style={
        draw,
      postaction={decorate,
                    decoration={markings}
                   }
        }
]

\draw[thick, red] (2,2)--(1,3)--(-2,0);

\draw[thick] (0,0) -- (2,2)--(4,0);
 \draw[thick] (1,1) -- (2,0);

\draw[thick] (0,0) -- (2,2)--(4,0);
\draw[thick] (1,1) -- (1,0);
\draw[thick] (1,1) -- (2,0);
\draw[thick, red] (2,2) -- (3,0);

\draw[thick] (1,3) -- (-1,0);

 \draw[thick] (1,3)--(1,3.5);

\node at (-2,-.75) {$\scalebox{.75}{$1$}$};
\node at (-1,-.75) {$\scalebox{.75}{$2$}$};
\node at (0,-.75) {$\scalebox{.75}{$3$}$};
\node at (1,-.75) {$\scalebox{.75}{$4$}$};
\node at (2,-.75) {$\scalebox{.75}{$5$}$};
\node at (3,-.75) {$\scalebox{.75}{$6$}$};
\node at (4,-.75) {$\scalebox{.75}{$7$}$};

\end{tikzpicture}
\; \;
\begin{tikzpicture}[x=.5cm, y=.5cm,
    every edge/.style={
        draw,
      postaction={decorate,
                    decoration={markings}
                   }
        }
]

\draw[thick] (2,2)--(1,3)--(-2,0);

\draw[thick] (0,0) -- (2,2)--(4,0);
 \draw[thick] (1,1) -- (2,0);

\draw[thick] (0,0) -- (2,2)--(4,0);
\draw[thick] (1,1) -- (1,0);
\draw[thick] (1,1) -- (2,0);
\draw[thick] (2,2) -- (3,0);

\draw[thick, red] (1,3) -- (-1,0);

 \draw[thick, red] (1,3)--(1,3.5);

\node at (-2,-.75) {$\scalebox{.75}{$1$}$};
\node at (-1,-.75) {$\scalebox{.75}{$2$}$};
\node at (0,-.75) {$\scalebox{.75}{$3$}$};
\node at (1,-.75) {$\scalebox{.75}{$4$}$};
\node at (2,-.75) {$\scalebox{.75}{$5$}$};
\node at (3,-.75) {$\scalebox{.75}{$6$}$};
\node at (4,-.75) {$\scalebox{.75}{$7$}$};
\end{tikzpicture}
\; \; 
\begin{tikzpicture}[x=.5cm, y=.5cm,
    every edge/.style={
        draw,
      postaction={decorate,
                    decoration={markings}
                   }
        }
]

\draw[thick] (2,2)--(1,3)--(-2,0);

\draw[thick] (1,1) -- (2,2)--(4,0);
 \draw[thick] (1,1) -- (2,0);

\draw[thick, red] (0,0) -- (1,1);
\draw[thick] (1,1) -- (1,0);
\draw[thick, red] (1,1) -- (2,0);
\draw[thick] (2,2) -- (3,0);

\draw[thick] (1,3) -- (-1,0);

 \draw[thick] (1,3)--(1,3.5);

\node at (-2,-.75) {$\scalebox{.75}{$1$}$};
\node at (-1,-.75) {$\scalebox{.75}{$2$}$};
\node at (0,-.75) {$\scalebox{.75}{$3$}$};
\node at (1,-.75) {$\scalebox{.75}{$4$}$};
\node at (2,-.75) {$\scalebox{.75}{$5$}$};
\node at (3,-.75) {$\scalebox{.75}{$6$}$};
\node at (4,-.75) {$\scalebox{.75}{$7$}$};

\end{tikzpicture}
\; \; 
\begin{tikzpicture}[x=.5cm, y=.5cm,
    every edge/.style={
        draw,
      postaction={decorate,
                    decoration={markings}
                   }
        }
]

\draw[thick] (2,2)--(1,3)--(-2,0);

\draw[thick, red] (1,1) -- (2,2)--(4,0);
 \draw[thick] (1,1) -- (2,0);

\draw[thick] (0,0) -- (1,1);
\draw[thick, red] (1,1) -- (1,0);
\draw[thick] (1,1) -- (2,0);
\draw[thick] (2,2) -- (3,0);

\draw[thick] (1,3) -- (-1,0);

 \draw[thick] (1,3)--(1,3.5);

\node at (-2,-.75) {$\scalebox{.75}{$1$}$};
\node at (-1,-.75) {$\scalebox{.75}{$2$}$};
\node at (0,-.75) {$\scalebox{.75}{$3$}$};
\node at (1,-.75) {$\scalebox{.75}{$4$}$};
\node at (2,-.75) {$\scalebox{.75}{$5$}$};
\node at (3,-.75) {$\scalebox{.75}{$6$}$};
\node at (4,-.75) {$\scalebox{.75}{$7$}$};

\end{tikzpicture}
\] 
 
  \begin{figure}[h]
 \[\begin{tikzpicture}[x=1cm, y=1cm,
    every edge/.style={
        draw,
      postaction={decorate,
                    decoration={markings}
                   }
        }
]

\draw[thick] (0.5,0.75)--(.5,0);
\draw[->, thick,red] (0,0)--(.25,.25);
\draw[->, thick,red] (.5,.5)--(.8,.2);
\draw[thick,red] (0,0)--(.5,.5)--(1,0);
 
\end{tikzpicture}
\quad
\begin{tikzpicture}[x=1cm, y=1cm,
    every edge/.style={
        draw,
      postaction={decorate,
                    decoration={markings}
                   }
        }
]

\draw[thick] (0.5,0.75)--(.5,0);
\draw[->, thick,red] (0.5,0.5)--(.2,.2);
\draw[->, thick,red] (1,0)--(.75,.25);
\draw[thick,red] (0,0)--(.5,.5)--(1,0);
 \end{tikzpicture}
\quad
\begin{tikzpicture}[x=1cm, y=1cm,
    every edge/.style={
        draw,
      postaction={decorate,
                    decoration={markings}
                   }
        }
]

\draw[->, red, thick] (0.5,0)--(.5,.25);
\draw[->, red, thick] (0.5,0.25)--(.5,.75);
\draw[thick] (0.5,0.5)--(.25,.25);
\draw[thick] (1,0)--(.75,.25);
\draw[thick] (0,0)--(.5,.5)--(1,0);
 \end{tikzpicture}
\quad
\begin{tikzpicture}[x=1cm, y=1cm,
    every edge/.style={
        draw,
      postaction={decorate,
                    decoration={markings}
                   }
        }
]

\draw[->, red, thick] (0.5,0.75)--(.5,.55);
\draw[->, red, thick] (0.5,0.75)--(.5,0);
\draw[thick] (0.5,0.5)--(.25,.25);
\draw[thick] (1,0)--(.75,.25);
\draw[thick] (0,0)--(.5,.5)--(1,0);
 \end{tikzpicture}
\]
\caption{Rules for defining the permutation.}  \label{rules}
\end{figure}
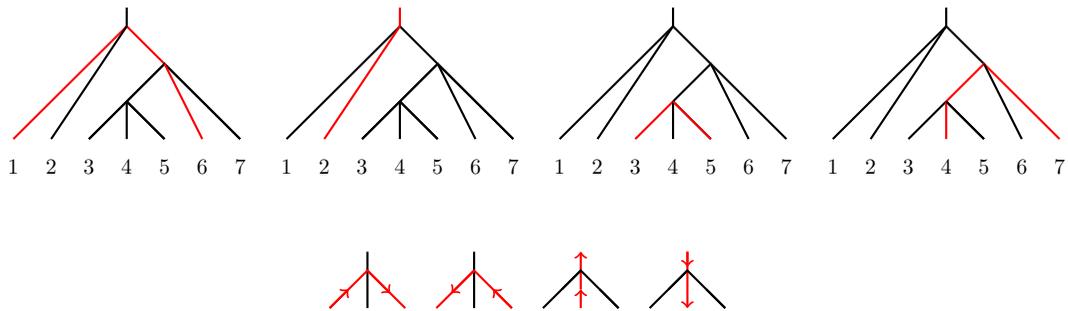
We note that in every tree there exists exactly one path from a leaf, say $f$, to the root. For this path we consider the 
  permutation $(0,f)$. For example, in our case we have $(1,6)$, $(2,0)$, $(3,5)$, $(4,7)$, $(5,3)$, $(6,1)$, $(7,4)$.    
Since all the transpositions (but the one corresponding to the root) occur exactly twice, we set aside only one of each.
Now we define the permutation $\pi(T_+): \{0,1, \ldots , 2n+1\}\to\{0,1, \ldots , 2n+1\}$ to be the product of all these transpositions. 
We call $\pi(T_+)$ the tangled permutation associated with $T_+$.
In this example, we get $\pi(T_+)=(1,6)(2,0)(3,5)(4,7)$.

For the second tree we follow the same procedure. 
For $T_-$ the paths are

\[
\begin{tikzpicture}[x=.5cm, y=.5cm,
    every edge/.style={
        draw,
      postaction={decorate,
                    decoration={markings}
                   }
        }
]

\draw[thick, red] (2,2)--(1,3)--(-2,0);

\draw[thick] (0,0) -- (2,2)--(4,0);

\draw[thick] (0,0) -- (2,2);
\draw[thick, red] (2,2) -- (1,0);
\draw[thick] (3,1) -- (2,0);
\draw[thick] (3,1) -- (3,0);

\draw[thick] (1,3) -- (-1,0);

 \draw[thick] (1,3)--(1,3.5);

\node at (-2,-.75) {$\scalebox{.75}{$1$}$};
\node at (-1,-.75) {$\scalebox{.75}{$2$}$};
\node at (0,-.75) {$\scalebox{.75}{$3$}$};
\node at (1,-.75) {$\scalebox{.75}{$4$}$};
\node at (2,-.75) {$\scalebox{.75}{$5$}$};
\node at (3,-.75) {$\scalebox{.75}{$6$}$};
\node at (4,-.75) {$\scalebox{.75}{$7$}$};

\end{tikzpicture}
\; \; 
\begin{tikzpicture}[x=.5cm, y=.5cm,
    every edge/.style={
        draw,
      postaction={decorate,
                    decoration={markings}
                   }
        }
]

\draw[thick] (2,2)--(1,3)--(-2,0);

\draw[thick] (0,0) -- (2,2)--(4,0);

\draw[thick] (0,0) -- (2,2);
\draw[thick] (2,2) -- (1,0);
\draw[thick] (3,1) -- (2,0);
\draw[thick] (3,1) -- (3,0);

\draw[thick, red] (1,3) -- (-1,0);

 \draw[thick, red] (1,3)--(1,3.5);

\node at (-2,-.75) {$\scalebox{.75}{$1$}$};
\node at (-1,-.75) {$\scalebox{.75}{$2$}$};
\node at (0,-.75) {$\scalebox{.75}{$3$}$};
\node at (1,-.75) {$\scalebox{.75}{$4$}$};
\node at (2,-.75) {$\scalebox{.75}{$5$}$};
\node at (3,-.75) {$\scalebox{.75}{$6$}$};
\node at (4,-.75) {$\scalebox{.75}{$7$}$};

\end{tikzpicture}
\;
\;
\begin{tikzpicture}[x=.5cm, y=.5cm,
    every edge/.style={
        draw,
      postaction={decorate,
                    decoration={markings}
                   }
        }
]

\draw[thick] (2,2)--(1,3)--(-2,0);

\draw[thick, red] (0,0) -- (2,2);
\draw[thick, red] (2,2)--(3,1);
\draw[thick] (3,1)--(4,0);

 \draw[thick] (2,2) -- (1,0);
\draw[thick] (3,1) -- (2,0);
\draw[thick, red] (3,1) -- (3,0);

\draw[thick] (1,3) -- (-1,0);

 \draw[thick] (1,3)--(1,3.5);

\node at (-2,-.75) {$\scalebox{.75}{$1$}$};
\node at (-1,-.75) {$\scalebox{.75}{$2$}$};
\node at (0,-.75) {$\scalebox{.75}{$3$}$};
\node at (1,-.75) {$\scalebox{.75}{$4$}$};
\node at (2,-.75) {$\scalebox{.75}{$5$}$};
\node at (3,-.75) {$\scalebox{.75}{$6$}$};
\node at (4,-.75) {$\scalebox{.75}{$7$}$};

\end{tikzpicture} \; \;
\begin{tikzpicture}[x=.5cm, y=.5cm,
    every edge/.style={
        draw,
      postaction={decorate,
                    decoration={markings}
                   }
        }
]

\draw[thick] (2,2)--(1,3)--(-2,0);

\draw[thick] (0,0) -- (2,2)--(3,1);
\draw[thick, red] (3,1)--(4,0);

\draw[thick] (0,0) -- (2,2);
\draw[thick] (2,2) -- (1,0);
\draw[thick, red] (3,1) -- (2,0);
\draw[thick] (3,1) -- (3,0);

\draw[thick] (1,3) -- (-1,0);

 \draw[thick] (1,3)--(1,3.5);

\node at (-2,-.75) {$\scalebox{.75}{$1$}$};
\node at (-1,-.75) {$\scalebox{.75}{$2$}$};
\node at (0,-.75) {$\scalebox{.75}{$3$}$};
\node at (1,-.75) {$\scalebox{.75}{$4$}$};
\node at (2,-.75) {$\scalebox{.75}{$5$}$};
\node at (3,-.75) {$\scalebox{.75}{$6$}$};
\node at (4,-.75) {$\scalebox{.75}{$7$}$};

\end{tikzpicture}
\]
and the transpositions are $(1,4)$, $(2,0)$, $(3,6)$, $(4,1)$, $(5,7)$, $(6,3)$, $(7,5)$. Therefore, the permutation associated to $T_-$ is $\pi(T_-)=(0,2)(1,4)(3,6)(5,7)$. 

Finally, given the ternary tree diagram  $(T_+,T_-)$ we associate a   permutation $\CP(T_+,T_-)$ in the following way.
First, we draw $\mathcal{L}(T_+,T_-)$.  The link diagram is divided into two parts: one in upper-half plane, one in the lower-half plane.
Then we draw the $x$-axis and number the intersection points. 
Say from $0$ to $n$. Our permutation is going to act on $\{0, 1, \ldots ,  n\}$.
Now for each component, we orient upward the strand passing through its leftmost intersection point with the $x$-axis.
This gives an orientation to the whole link.
For each $i$ in ${1, ..., n}$, we decree that the Thompson permutation $\mathcal{P}(g)$ maps $i$ to the next integer (on the $x$-axis) met in $\mathcal{L}(g)$ moving along the given direction.

An equivalent description makes use of the permutations  $\pi(T_+)$ and $\pi(T_-)$
defined before. This is what we use for our numerical explorations of the positive elements.

Take $0$ and  alternately apply  $\pi(T_+)$ and $\pi(T_-)$ to it  until you obtain $0$, that is consider 
$0$, $\pi(T_+)(0)$, $\pi(T_-)(\pi(T_+)(0))$, $\pi(T_+)(\pi(T_-)(\pi(T_+)(0)))$, \ldots 
We set aside the cycle $(0, \pi(T_+)(0), \ldots, \ldots , \pi(T_-)(\pi(T_+)\pi(T_-))^{k-1}(0))$.
Repeat this procedure with the smallest non-negative integer not contained in this cycle and get a second cycle. 
Continue in the same manner until you finish.
 The Thompson permutation $\CP(T_+,T_-)$ is the product of these cycles.
  In our example we have 
 $
\CP(T_+,T_-)=
(1,6,3,5,7,4) (0,2)$.
\[
\begin{tikzpicture}[x=.35cm, y=.35cm,
    every edge/.style={
        draw,
      postaction={decorate,
                    decoration={markings}
                   }
        }
]

\node at (-5.5,0) {$\scalebox{1}{$\CL(x_2)=$}$};
\node at (-1.25,-3.25) {\;};

\draw[thick] (-2,0) to[out=90,in=90] (2,1.7);  
\draw[thick] (0,0) to[out=90,in=90] (2,0);  
\draw[thick] (1,.8) to[out=90,in=90] (4,0);  

\draw[thick] (-2,0) to[out=-90,in=-90] (2,-1.7);  
\draw[thick] (2,0) to[out=-90,in=-90] (4,0);  
\draw[thick] (0,0) to[out=-90,in=-90] (3,-.8);  

 \draw[thick] (-3,2.7)--(-3,-2.7);
\draw[thick] (-3,2.7) to[out=90,in=90] (1,2.7);  
\draw[thick] (-3,-2.7) to[out=-90,in=-90] (1,-2.7);  

\draw[thick] (1,2.1) to[out=-90,in=90] (-1,0);
\draw[thick] (-1,0) to[out=-90,in=135] (1,-2.1);
\draw[thick] (1,.4) to[out=-90,in=90] (2,-1);
\draw[thick] (2,1.2) to[out=-90,in=90] (3,-.3);

\end{tikzpicture}\; \; 
\begin{tikzpicture}[x=.35cm, y=.35cm,
    every edge/.style={
        draw,
      postaction={decorate,
                    decoration={markings}
                   }
        }
]

\draw[thick, red, dashed] (-4,0)--(5,0);

\node[red] at (-3.5,-.75) {$\scalebox{.75}{$0$}$};
\node[red] at (-2.2,-.75) {$\scalebox{.75}{$1$}$};
\node[red] at (-1.2,-.75) {$\scalebox{.75}{$2$}$};
\node[red] at (0,-.75) {$\scalebox{.75}{$3$}$};
\node[red] at (1,-.75) {$\scalebox{.75}{$4$}$};
\node[red] at (2.2,-.75) {$\scalebox{.75}{$5$}$};
\node[red] at (3.2,.5) {$\scalebox{.75}{$6$}$};
\node[red] at (4,-.75) {$\scalebox{.75}{$7$}$};

\node at (-5,0) {$\scalebox{1}{$ =$}$};
\node at (-1.25,-3.25) {\;};

\draw[thick, ->] (-2,0) to[out=90,in=90] (2,1.7);  
\draw[thick] (0,0) to[out=90,in=90] (2,0);  
\draw[thick] (1,.8) to[out=90,in=90] (4,0);  

\draw[thick] (-2,0) to[out=-90,in=-90] (2,-1.7);  
\draw[thick] (2,0) to[out=-90,in=-90] (4,0);  
\draw[thick] (0,0) to[out=-90,in=-90] (3,-.8);  

 \draw[thick] (-3,2.7)--(-3,-2.7);
\draw[thick, ->] (-3,2.7) to[out=90,in=90] (1,2.7);  
\draw[thick] (-3,-2.7) to[out=-90,in=-90] (1,-2.7);  

\draw[thick] (1,2.1) to[out=-90,in=90] (-1,0);
\draw[thick] (-1,0) to[out=-90,in=135] (1,-2.1);
\draw[thick] (1,.4) to[out=-90,in=90] (2,-1);
\draw[thick] (2,1.2) to[out=-90,in=90] (3,-.3);

\end{tikzpicture}
\]
When we start with a binary tree diagram $(T_+,T_-)$, that is an element of $F$, we consider $\iota(T_+,T_-)$ and its Thompson permutation is $\CP(\iota(T_+,T_-))$.

\begin{notation}
By the above discussion, we may assign an orientation to any link produced out of elements of $F_3$: we start looking each component of the link and  orient upward the left-most arc  of the link passing through the $x$-axis. For $g$ in $F$ or $F_3$, we denote by $\vec{{\text \textsterling}}(g)$ the corresponding oriented link. 
We recall the notation existing in the literature on Jones' construction of links:
\begin{itemize}
\item for $g$ in $F$ or $F_3$, $\CL(g)$ is the corresponding unoriented link:
\item for $g$ in $\vec{F}$ or $\vec{F}_3$, $\vec{\CL}(g)$ is the associated oriented link.
\end{itemize}
\end{notation}
A natural question is whether with this procedure we get all oriented links. We provide a positive answer with the following Alexander type theorem. The situation is thus analogous to that of $F$ and $F_3$, \cite{Jo14, A}.
\begin{theorem}\label{theo1}
For every oriented link $\vec{L}$, there exists an element $g$ in $F$ such that  $\vec{{\text \textsterling}}(g)$.
\end{theorem}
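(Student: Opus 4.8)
The plan is to bootstrap from the unoriented Alexander-type theorem of Jones \cite{Jo14}: given an oriented link $\vec L$, forget its orientation to obtain an unoriented link $L$, pick $g_{0}\in F$ with $\CL(g_{0})=L$, and then correct the canonical orientation one component at a time until it matches $\vec L$. (Equivalently, one could start from the oriented Alexander-type theorem for the subgroup $\vec F$ of \cite{A}, writing $\vec{\CL}(h)=\vec L$ with $h\in\vec F\subseteq F$, and then compare $\vec{\CL}(h)$ with $\vec{{\text \textsterling}}(h)$; the two are orientations of the same unoriented link and differ only by reversing some set of components, so the same correction step finishes the argument.) Concretely, $\vec{{\text \textsterling}}(g_{0})$ is some orientation of $L$; let $S$ be the finite set of components of $\CL(g_{0})$ on which it disagrees with $\vec L$. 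If for each component $C$ we can produce $g'\in F$ with $\CL(g')=\CL(g)$ (the same unoriented diagram, up to planar isotopy) and with $\vec{{\text \textsterling}}(g')$ obtained from $\vec{{\text \textsterling}}(g)$ by reversing the orientation of $C$ and leaving all other components unchanged, then applying this \emph{component-reversal move} once for each $C\in S$ yields the desired $g$.

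Here is how I would set up the component-reversal move. Every component $C$ of a Thompson diagram meets the $x$-axis, since, traversing $C$, each $x$-axis crossing switches between the upper-half-plane part (the top tree) and the lower-half-plane part (the bottom tree); moreover the successive $x$-axis crossings along $C$ alternate between being traversed upward and downward, so $C$ has at least one crossing traversed \emph{downward} with respect to $\vec{{\text \textsterling}}(g)|_{C}$. Sliding that crossing out around the rest of the diagram to a position strictly to the left of every other $x$-axis crossing is a planar isotopy --- hence does not change the unoriented link $\CL(g)$ --- but it makes this downward crossing the new leftmost $x$-axis crossing of $C$, while leaving the leftmost $x$-axis crossing of every other component untouched. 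By definition the convention then re-orients $C$ so that this crossing is traversed upward, i.e.\ it reverses exactly the orientation of $C$ and nothing else.

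The main obstacle is to realise this move honestly inside the Thompson world: to exhibit $g'$ as an explicit element of $F$ (or of $F_{3}$ via $\iota$) whose reduced tree pair produces the modified diagram --- equivalently, to absorb the extra ``hook on the far left'' into the top and bottom trees, say by grafting suitable carets near the leftmost leaves and along the edge joining the two roots, and to check that this grafting does change which crossing of $C$ is leftmost while leaving $\CL$ unchanged as an unoriented link. Invoking \cite{Jo14} at this point is not enough, since it only produces \emph{some} element realising the modified unoriented link and gives no control over its canonical orientation; the modification genuinely has to be performed at the level of tree pairs. Once this bookkeeping is carried out, iterating over the components of $S$ gives $g\in F$ with $\CL(g)=L$ and $\vec{{\text \textsterling}}(g)=\vec L$, proving the theorem. (A variant that sidesteps the explicit move is to rerun Jones's proof of \cite{Jo14} on a chosen diagram of $\vec L$ while tracking the orientation convention throughout; the same difficulty then reappears as the need to guarantee that each component is picked up travelling in the correct direction.)
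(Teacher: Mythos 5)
Your overall strategy coincides with the paper's: forget the orientation, invoke Jones's unoriented Alexander theorem \cite{Jo14} to obtain $g_0\in F$ with $\CL(g_0)=L$, and then fix the orientation one component at a time by a move that reverses a single component while leaving the unoriented link unchanged. However, the entire content of the theorem sits in that component-reversal move realised \emph{inside} $F$, and this is exactly what you do not supply: you state yourself that ``the main obstacle is to realise this move honestly inside the Thompson world'' and conclude ``once this bookkeeping is carried out'' the theorem follows. That bookkeeping is the paper's proof. The paper's move is a concrete local substitution on the tree pair: on the component whose orientation disagrees with $\vec{L}$, choose a pair of glued leaves (a vertical arc of the diagram crossing the $x$-axis) and replace it by a copy of $y_0$ (by $x_0$ if one works in $F_3$), i.e.\ graft the top and bottom trees of the generator at that leaf pair. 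Being a grafting of carets, this manifestly produces an element of $F$; the inserted tangle does not change the unoriented link, and, as the diagrams in the paper show, it reverses the orientation of that component under the leftmost-arc-upward convention. No such explicit element is produced in your argument.

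Moreover, the mechanism you propose in place of this substitution is itself doubtful. Sliding a chosen downward $x$-axis crossing ``out around the rest of the diagram'' to a position left of all other crossings is in general not a planar isotopy with the rest of the diagram fixed: the arc may be nested among other strands, and carrying it around the diagram changes over/under information or requires passing through other arcs. Even granting some ambient isotopy, the resulting picture is no longer of the form $\CL(g')$ for an element $g'$ of $F$, and the orientation convention of the paper is defined only on diagrams in Thompson form; so the convention cannot be applied to the isotoped picture to conclude anything about an element of $F$. This is precisely why the reversal has to be implemented as an operation on tree pairs, as in the paper, rather than as an isotopy of diagrams; as it stands, your argument has a genuine gap at its crucial step.
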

\begin{proof}
Given an oriented link $\vec{L}$, first we ignore its orientation and consider the underlying unoriented link $L$.
We then use Jones' method from \cite{Jo14} to produce an element $g$ of $F$
such that $\CL(g)$ is $L$.
Now if we orient $\CL(g)$ according to  the convention of this paper, the oriented link that we obtain may or may not be equal to $\vec{L}$.
If the orientation of a component of the link is not the same, 
consider the pair of glued leaves whose corresponding arc in the link diagram has opposite orientation and 
replace them with a copy of $y_0$. 
This reverses the orientation of the component.
We can use the same method to change the orientation of a component in a link $\CL(g)$, with $g\in F_3$. In this case use $x_0$ instead of $y_0$.
Below follow the transformations for  $F$ and $F_3$, 
and on drawing to show what happens on the level of link diagrams.
\[
\begin{tikzpicture}[x=.35cm, y=.35cm,
    every edge/.style={
        draw,
      postaction={decorate,
                    decoration={markings}
                   }
        }
]

\node at (-1.25,-4) {\;};

 \draw[thick] (2,2.5)--(2,-2.5);
 
   \fill (2,0)  circle[radius=1.5pt];

\end{tikzpicture}
\quad
\begin{tikzpicture}[x=.35cm, y=.35cm,
    every edge/.style={
        draw,
      postaction={decorate,
                    decoration={markings}
                   }
        }
]

   \fill (0,0)  circle[radius=1.5pt];
   \fill (2,0)  circle[radius=1.5pt];
   \fill (4,0)  circle[radius=1.5pt];

\node at (-1.5,0) {$\scalebox{1}{$\mapsto$}$};
\node at (-1.25,-4) {\;};

\draw[thick] (0,0) -- (2,2)--(4,0)--(2,-2)--(0,0);
 \draw[thick] (1,1) -- (2,0)--(3,-1);

 \draw[thick] (2,2)--(2,2.5);

 \draw[thick] (2,-2)--(2,-2.5);

\end{tikzpicture}
\qquad
\begin{tikzpicture}[x=.35cm, y=.35cm,
    every edge/.style={
        draw,
      postaction={decorate,
                    decoration={markings}
                   }
        }
]

\node at (-1.25,-4) {\;};

 \draw[thick] (2,2.5)--(2,-2.5);
 
   \fill (2,0)  circle[radius=1.5pt];

\end{tikzpicture}
\quad
\begin{tikzpicture}[x=.35cm, y=.35cm,
    every edge/.style={
        draw,
      postaction={decorate,
                    decoration={markings}
                   }
        }
]

   \fill (0,0)  circle[radius=1.5pt];
   \fill (1,0)  circle[radius=1.5pt];
   \fill (2,0)  circle[radius=1.5pt];
   \fill (3,0)  circle[radius=1.5pt];
   \fill (4,0)  circle[radius=1.5pt];

\node at (-1.5,0) {$\scalebox{1}{$\mapsto$}$};
\node at (-1.25,-4) {\;};

\draw[thick] (0,0) -- (2,2)--(4,0)--(2,-2)--(0,0);
 \draw[thick] (1,1) -- (2,0)--(3,-1);

 \draw[thick] (2,2)--(2,2.5);

 \draw[thick] (2,-2)--(2,-2.5);

 \draw[thick] (1,1)--(1,0)--(2,-2);
 \draw[thick] (2,2)--(3,0)--(3,-1);

\end{tikzpicture}
\qquad
\qquad
\begin{tikzpicture}[x=.35cm, y=.35cm,
    every edge/.style={
        draw,
      postaction={decorate,
                    decoration={markings}
                   }
        }
]

 \draw[thick, <-] (2,2.5)--(2,-3);
 \node at (-1.5,-4) {$\scalebox{.75}{$\,$}$};

\end{tikzpicture}
\begin{tikzpicture}[x=.35cm, y=.35cm,
    every edge/.style={
        draw,
      postaction={decorate,
                    decoration={markings}
                   }
        }
]

\node at (-1.5,0) {$\scalebox{.75}{$\mapsto$}$};
\node at (-1.5,-4) {$\scalebox{.75}{$\,$}$};

\draw[thick, ->] (2,2.5) to[out=90,in=90] (2,1.7);  
\draw[thick, ->] (0,0) to[out=90,in=90] (2,0);  
\draw[thick] (1,.8) to[out=90,in=90] (4,0);  

\draw[thick,>-] (2,-2.5) to[out=-90,in=-90] (2,-1.7);  
\draw[thick] (2,0) to[out=-90,in=-90] (4,0);  
\draw[thick] (0,0) to[out=-90,in=-90] (3,-.8);

\draw[thick] (1,.4) to[out=-90,in=90] (2,-1);
\draw[thick] (2,1.2) to[out=-90,in=90] (3,-.3);

\end{tikzpicture}
\]

\end{proof}
 
\begin{remark}
Let $g$ be in $\vec{F}$ or $\vec{F}_3$. If $\vec{\CL}(g)$ is knot (i.e. it has only one connected component), then 
$\vec{\CL}(g)$ 
is equal to $\vec{{\text \textsterling}}(g)$. 
For links this is in general not true.
\end{remark}

\section{Computational explorations in $F_{3,+}$}
  
A positive element of $F_{3,+}$ can be written uniquely as 
$x_0^{a_0}\cdots x_n^{a_n}$ for some $n$, $a_0$, $\ldots$, $a_{n-1}\in \IN_0=\{0, 1, 2, \ldots\}$ and 
$a_n\in \IN=\{1, 2, \ldots\}$.
We call $n$ the width and $\max_{i}a_i$ the height of the element.

In this section we present the results of some numerical experiments where we fix width and vary the height.
For each pair width and height we group the permutations by the number of cycles in the cycle decomposition (or orbits).
We examined 1350210 elements in total.
Based on our explorations, we present some conjectures.

In the following, $w$ is the width, $h$ is the height. Note that there are $(h+1)^{w}$ permutation with width at most $w$ and height at most $h$.
It is easy to show that $\mathcal{L}(y_0^n)$ is the unknot for all $n\geq 0$, so we only consider the case $w\geq 2$.
Here follow five subsections for the values $w=2, 3, 4, 5, 6$. 
The results are also presented in Figures \ref{tablesexplorations}
and \ref{permutationsdistribution}.

\begin{definition}
For each pair $(w,h)$, 
we group the corresponding permutations with respect to the number of orbits, we call these groups \emph{classes}. 
\end{definition}

The data is available in \cite{AiIo} and the code used to produce it is in \url{https://github.com/valerianoaiello/Positive-Thompson-knots}.

\begin{conjecture} 
\begin{enumerate}
\item
For all $w$ and $h$, let $M$ be the maximum number of cycles of a permutation with width at most $w$ and height at most $h$. Then, 
for any integer $j$ in $\{1, \ldots , M\}$
there exists a permutation whose weight and height are at most $w$ and $h$, respectively, whose number of orbits is $j$.
\item 
For $w=2$ we considered all permutations up to height equal to $100$  (the number of permutation of height at most $100$ is $10201$).
For $w=2$ and $h\geq 0$, the maximum number of cycles is $h+1$.
For $h\geq 0$, the largest class  consists of the permutations with $1$ orbit.
\item
For $w=3$ we considered all permutations up to height equal to $35$  (the number of permutation of height at most $35$ is $42875$ permutations).
For $w=3$ and $h\geq 0$, the maximum number of cycles is $h+1$.
For $h\geq 3$, the largest class  consists of the permutations with $2$ distinct cycles.
\item
For $w=4$ and $h\geq 1$, the maximum number of cycles is $2h$.
\item
For $w=5$ and $h\geq 2$, the maximum number of cycles is $2h$
and the largest class  consists of the permutations with $h-1$ distinct cycles.
\item
For $w=6$ and $h\geq 2$, the maximum number of cycles is $3h-1$.
\item
For $w=7$ and $h\geq 2$, the maximum number of cycles is $3h-1$.
For $h\geq 2$, the largest class consists of the permutations with $h$ distinct cycles.
\end{enumerate}
\end{conjecture}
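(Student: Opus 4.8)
The plan is to recast the whole problem in terms of perfect matchings. As recalled above, $\CP(T_+,T_-)$ is obtained by alternately applying the two involutions $\pi(T_+)$ and $\pi(T_-)$, each of which is a product of $n+1$ disjoint transpositions on $\{0,1,\dots,2n+1\}$, i.e. a perfect matching of the $2n+2$ points. Writing $M_+$ and $M_-$ for these two matchings, the superimposed multigraph $M_+\cup M_-$ is $2$-regular and hence a disjoint union of alternating, even-length cycles; one checks directly that its cycles are exactly the orbits of $\CP(T_+,T_-)$, so the number of orbits---equivalently the number of components of $\CL(g)$---equals the number of cycles of $M_+\cup M_-$. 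For a positive element $x_0^{a_0}\cdots x_w^{a_w}$ the bottom tree is the standard right comb, so $M_-$ is a fixed matching depending only on the total number of leaves, while $M_+$ is determined by the exponent vector $(a_0,\dots,a_w)$. First I would write both matchings down explicitly: $M_-$ from the comb, and $M_+$ by composing the elementary matchings contributed by each generator $x_i$, reading the word left to right. This turns every statement in the conjecture into a purely combinatorial assertion about cycles of $M_+\cup M_-$, with $M_-$ fixed.

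With this dictionary the intermediate-value statement (1) becomes the most accessible. The key lemma would be a \emph{splice lemma}: incrementing a single exponent $a_i$ by one (equivalently, inserting one caret, i.e. one crossing, into the diagram) performs a single local surgery on the $2$-regular graph $M_+\cup M_-$, which either merges two cycles into one or splits one cycle into two, so the number of cycles changes by exactly $\pm 1$. Granting this, fix $(w,h)$ and note that the set of admissible exponent vectors is a combinatorial box, connected under the moves $a_i\mapsto a_i\pm 1$; any such move changes the orbit count by at most one. Choosing an explicit $1$-orbit word of width $w$ and a word attaining the maximum $M$, and joining them by a path of admissible moves inside the box, a discrete intermediate value argument then produces a word with exactly $j$ orbits for every $j\in\{1,\dots,M\}$. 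The one point requiring care is that inserting a caret enlarges the leaf set of \emph{both} trees simultaneously, so the surgery must be analysed on the common larger ground set; I expect this to be routine from the local crossing picture of Figure \ref{figrulescross}, but it is where the bookkeeping lives.

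For the maximum-number-of-cycles formulas (2)--(7) I would separate lower and upper bounds. The lower bound is constructive: the conjectured values $h+1,\,h+1,\,2h,\,2h,\,3h-1,\,3h-1$ fit the single formula $\lfloor w/2\rfloor\, h-\lfloor w/2\rfloor+2$, and I would exhibit, for each width, an explicit family of exponent vectors whose union $M_+\cup M_-$ has exactly this many cycles, verifying the count directly from the explicit matchings. The upper bound is the genuine difficulty: the crude bound ``number of cycles $\le n+1=\sum_i a_i+1$'' coming from $2$-regularity is far too weak, and the true constraint is that $M_-$ is a comb, which forces most alternating cycles to be long. I would try to encode the interaction of $M_+$ with the fixed comb $M_-$ by a transfer matrix acting on a bounded ``interface profile'' as the word is read left to right, so that for each fixed $w$ the maximal cycle count is read off from a finite automaton; this would simultaneously give the upper bound and, via the same automaton weighted by multiplicity, a handle on the largest-class statements in (2), (3), (5), (7).

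The main obstacle, and the part I would expect to resist a clean argument, is precisely the largest-class claims together with a \emph{uniform} proof of the maximum formula for all $w$. Identifying the dominant class requires controlling the entire distribution of cycle counts, not just its extreme value, and the transfer-matrix state space grows with $w$; for the specific small widths in the conjecture the automaton is finite and the claims reduce to a rigorous finite computation, but extracting the closed forms $\lfloor w/2\rfloor h-\lfloor w/2\rfloor+2$ and the class sizes for all $w$ would need an understanding of how the comb matching constrains cycle structure that goes beyond the case-by-case analysis, and this is where I would concentrate the hard work.
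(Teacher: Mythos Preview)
The first thing to note is that there is no proof in the paper to compare against: the statement is explicitly a \emph{conjecture}, formulated on the basis of the numerical experiments summarised in Figure~\ref{tablesexplorations}. The paper does not attempt to establish any of items (1)--(7) rigorously; it only records the pattern observed in the finite data sets listed there. So your proposal is not an alternative to the paper's argument --- it is an attempt to prove something the paper deliberately leaves open.

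As a research outline your recasting in terms of the two perfect matchings $M_+,M_-$ and the alternating-cycle count of $M_+\cup M_-$ is sound and is essentially how the code computes the orbit count. But the sketch is not a proof, and one concrete point already looks problematic. Your \emph{splice lemma} asserts that incrementing a single exponent $a_i$ changes the orbit count by exactly $\pm 1$. In $F_3$, adding one caret to the top tree adds \emph{two} leaves, so the bottom comb must also receive one extra caret; hence the link diagram gains \emph{two} new crossings (one from the top vertex, one from the bottom vertex), and both $M_+$ and $M_-$ change on the enlarged ground set. A priori the orbit count could then jump by $0$ or $\pm 2$ as well as $\pm 1$, and the discrete intermediate-value argument for item~(1) collapses unless you can rule these out. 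You flag the enlargement of the ground set as ``the one point requiring care'', but it is not just bookkeeping: it is exactly where the claimed $\pm 1$ may fail, and it has to be argued, not expected.

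For the remaining items you yourself identify the genuine difficulties: the upper bounds for the maxima and, especially, the identification of the largest class. The transfer-matrix idea is reasonable for each fixed small $w$, but even there it would constitute a case-by-case computer-verified argument rather than a uniform proof, and for the largest-class assertions you would need to control the full distribution, not just its support. In short: the reformulation is useful, the splice lemma is the first thing to either prove or repair, and the rest remains at the level of a plan rather than a proof --- which is consistent with the paper's decision to present these statements as conjectures.
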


\begin{figure}
{\footnotesize
\[
\begin{array}{|c|c|c|c|}
\hline w= 2   & \text{max number of orbits} & \text{largest class} &  \text{number of permutations}  \\
\hline
h =0 & 1 & 1 & 1\\
h =1 & 1 & 1 & 4\\
h \in [2, 100] & h+1 & 1 & \text{up to } 10201\\
\hline &  & & \\
%
\hline w=3   & \text{max number of orbits} & \text{largest class} &  \text{number of permutations}  \\
\hline
h =0 & 1 & 1 & 1\\
h =1 & 2 & 1 & 8\\
h =2 & 3 & 1 & 27\\
h \in  [3, 35] & h+1 & 2 & \text{up to } 42875 \\
\hline &  & & \\
\hline w=4   & \text{max number of orbits} & \text{largest class} &  \text{number of permutations}  \\
\hline
h =0 & 1 & 1 & 1\\
h =1 & 2 & 1, 2 & 16\\
h =2 & 4 & 2 & 81\\
h =3 & 6 & 2 & 256\\
h =4 & 8 & 3 & 625\\
h =5 & 10 & 5 & 1296\\
h =6 & 12 & 6 & 2401\\
h =7 & 14 & 6 & 4096\\
h =8 & 16 & 8 & 6561\\
h =9 & 18 & 9 & 10000\\
h =10 & 20 & 10 & 14641\\
h =11 & 22 & 11 & 20736\\
h =12 & 24 & 12 & 28561\\
\hline &  & & \\
\hline w=5   & \text{max number of orbits} & \text{largest class} &  \text{number of permutations}  \\
\hline
h =0 & 1 & 1 & 1\\
h =1 & 3 & 1 & 32\\
h \in [2, 11] & 2h & h-1 &  \text{up to } 248832\\ 
\hline &  & & \\
\hline w=6   & \text{max number of orbits} & \text{largest class} &  \text{number of permutations}  \\
\hline
h =0 & 1 & 1 & 1\\
h =1 & 3 & 1 & 64\\
h =2 & 5 & 2 & 729\\
h =3 & 8 & 3 & 4096\\
h =4 & 11 & 4 & 15625\\
h =5 & 14 & 5 & 46656\\
h=6 & 17 &  6 & 117649\\
h=7 & 20 & 7 & 262144\\
h=8 & 23 & 9 & 531441\\
\hline &  & & \\
\hline w=7   & \text{max number of orbits} & \text{largest class} &  \text{number of permutations}  \\
\hline
h =0 & 1 & 1 & 1\\
h =1 & 4 & 2 & 128\\
h\in [2, 6] & 3h-1 & h &   \text{up to } 823543 \\
\hline
\end{array}
\]
}
\caption{This table contains the data on permutations of width 2, 3, 4, 5, 6, 7.}\label{tablesexplorations}
\end{figure}

 \begin{figure}[h]
$$
\includegraphics[scale=0.5]{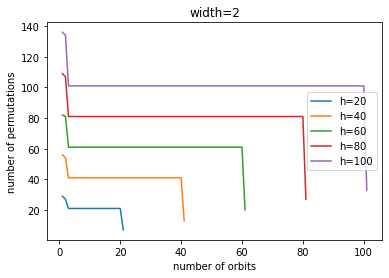}
\includegraphics[scale=0.5]{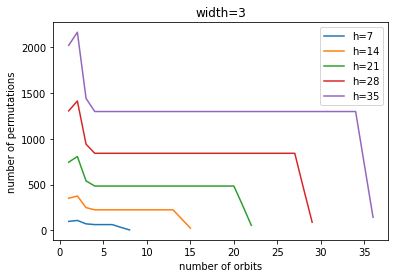}
$$
$$
\includegraphics[scale=0.5]{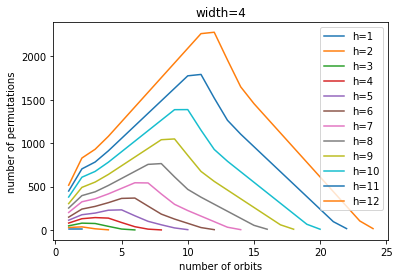}
\includegraphics[scale=0.5]{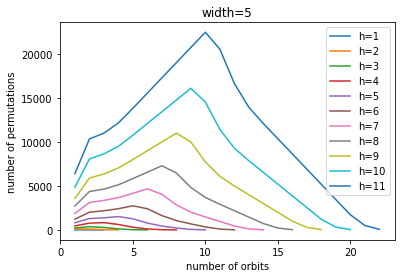}
$$
$$
\includegraphics[scale=0.5]{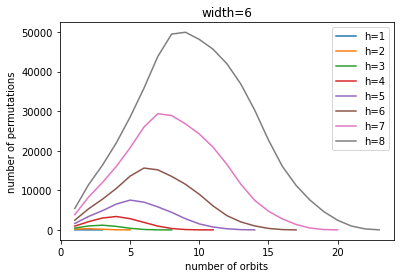}
\includegraphics[scale=0.5]{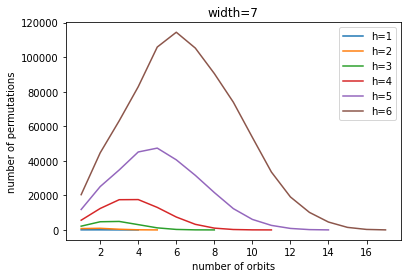}
 $$ 
$$
\includegraphics[scale=0.5]{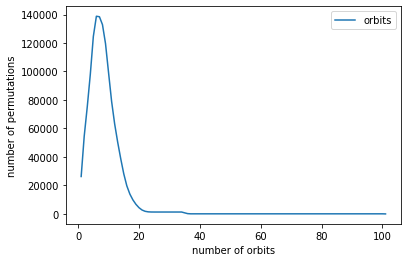}
 $$ 
 \caption{The distributions of permutations for width equal to $2, 3, 4, 5, 6, 7$ and the one cumulative with all the permutations considered in this article. The largest class of permutations in the whole dataset is the one with $6$ cycles.}\label{permutationsdistribution}
   \end{figure}
 
\section*{Appendix}
 In this appendix we briefly present the code developed for this project.
The main files are \emph{monoid\_elements\_generator.py} and \emph{positive\_bt\_permutations.py}.

In \emph{monoid\_elements\_generator.py}  we introduce the class 
\emph{MonoidElementsGenerator}, which contains two methods: 
\begin{itemize}
\item \emph{generate\_monoid\_elements},
\item \emph{random\_generate\_monoid\_elements}.
\end{itemize}
The former produces all positive elements of $F_3$ with height at most $h$ and width at most $w$.
The latter takes a positive integer $n$ as an argument and produces $n$ random positive elements with height at most $h$
and width at most $w$.

In  \emph{positive\_bt\_permutations.py} we present a function called \emph{whole\_permutation} that produces the Thompson permutations for positive elements of $F_3$.
It receives a natural number $k$ and a numpy array $v=(a_1, \ldots , a_w)$, 
whose components are all non-negative,
$w$ is the maximal width, 
$k$ is the number of leaves in the minimal representative in terms of ternary tree diagrams, 
while $v$ represents the exponents appearing
 in the description the element as $x_0^{a_1}\cdots x_{w-1}^{a_w}$.
The permutation produced by this function is given in the form of a list containing sublists. Each sublist is a cycle in the cycle notation of the permutation.
This function makes use of two other functions: \emph{bottom\_permutation} and \emph{top\_permutation}.
The former function computes the permutation associated with the bottom ternary tree of a positive Brown-Thompson element.
    The only input is a natural number, \emph{number\_of\_leaves}, which is an odd number.
    We only need an argument in this function because the bottom tree of a positive element has always the same shape.
The latter function  computes the permutation associated with the (ternary) top tree of a positive Brown-Thompson element.
    The inputs are \emph{monoid\_element}$=v$,
     and a natural number, \emph{number\_of\_leaves}, which is an odd number.
    The output of this function is a permutation.

\end{document}